\newcommand{\RR}{\mathbb{R}}
\newcommand{\norm}[2]{\left\|#1\right\|_{#2}}
\newcommand{\Fe}{\mathcal F_\varepsilon}
\newcommand{\Je}{\mathcal J_\varepsilon}
\newcommand{\Jeh}{\widehat{\mathcal J}_\varepsilon}
\newcommand{\ue}{u_\varepsilon}
\newcommand{\pe}{{\bf p}_{\varepsilon,\pi}}
\newcommand{\xe}{{\bf x}_\varepsilon}
\newcommand{\mS}{\mathcal{S}}
\newcommand{\aT}{{\bf a}_\pi}
\newcommand{\bT}{{\bf b}_\pi}
\newtheorem{theorem}{Theorem}[section]
\newtheorem{definition}[theorem]{Definition}
\newtheorem{remark}[theorem]{Remark}
\newtheorem{problem}{Problem}[section]
\numberwithin{equation}{section}
\title{Multilevel Selective Harmonic Modulation by Duality} 
\author{Umberto Biccari\textsuperscript{\,$\ast$}}  
\address{\textsuperscript{$\ast$}\, [1] Chair of Computational Mathematics, Fundaci\'on Deusto, Avenida de las Universidades 24, 48007 Bilbao, Basque Country, Spain} 
\address{[2]\,Facultad de Ingenier\'ia, Universidad de Deusto, Avenida de las Universidades 24, 48007 Bilbao, Basque Country, Spain.}
\email{umberto.biccari@deusto.es, u.biccari@gmail.com}
\thanks{This project has received funding from the European Research Council under the European Union’s Horizon 2020 research and innovation programme (grant agreement 694126-DyCon). The work of U.B. and E.Z. is supported by the Grant PID2020-112617GB-C22 KILEARN of MINECO (Spain), by the Elkartek grant KK-2020/00091 CONVADP of the Basque government and by the Air Force Office of Scientific Research (AFOSR) under Award FA9550-18-1-0242. The work of E.Z. is funded by the Alexander von Humboldt-Professorship program, the European Unions Horizon 2020 research and innovation programme under the Marie Sklodowska-Curie grant agreement 765579-ConFlex and the Transregio 154 Project ‘‘Mathematical Modelling, Simulation and Optimization Using the Example of Gas Networks’’, project C08, of the German DFG}
\author{Enrique Zuazua\textsuperscript{\,$\ddagger$}}
\address{\textsuperscript{$\ddagger$}\, [1] Chair for Dynamics, Control and Numerics, Alexander von Humboldt-Professorship, Department of Data Science, Friedrich-Alexander-Universit\"at Erlangen-N\"urnberg, 91058 Erlangen, Germany.}
\address{[2] Chair of Computational Mathematics, Fundaci\'on Deusto, Avenida de las Universidades 24, 48007 Bilbao, Basque Country, Spain.} 
\address{[3] Departamento de Matem\'aticas, Universidad Aut\'onoma de Madrid, 28049 Madrid, Spain.}
\email{enrique.zuazua@fau.de}
\keywords{Selective Harmonic Modulation, Optimal Control Theory, Duality, Piece-wise affine penalization, Multilevel controls.}
\begin{document}
	
\maketitle 

\begin{abstract} 
We address the Selective Harmonic Modulation (SHM) problem in power electronic engineering, consisting in designing a multilevel staircase control signal with some prescribed frequencies to improve the performances of a converter. In this work, SHM is addressed through an optimal control methodology based on duality, in which the admissible controls are piece-wise constant functions, taking values only in a given finite set. To fulfill this constraint, the cornerstone of our approach is the introduction of a special penalization in the cost functional, in the form of a piece-wise affine approximation of a parabola. In this manner, we build optimal multilevel controls having the desired staircase structure. 
\end{abstract}

\section{Introduction}\label{sec:intro}

In power electronic engineering, Selective Harmonic Modulation (SHM) \cite{Sun1992,Sun1996} is a well-known technique, employed to improve the performances of a converter by increasing its power and, at the same time, reducing its losses. In broad terms, the scope of SHM is to control the phase and amplitude of the harmonics in the converter's output voltage, by generating a control signal with a desired harmonic spectrum through the modulation of specific lower-order Fourier coefficients. 

In practice, this signal consists in a step function with a finite number of switches, taking values only in a given finite set. This function can be fully characterized by:
\begin{itemize}
	\item[1.] The \textit{waveform}, i.e. the sequence of values that the function takes in its domain.
	\item[2.] The \textit{switching angles}, i.e. the sequence of points where the signal switches from one value to following one. 
\end{itemize}

Hence, solving the SHM problem consists in determining the waveform and switching angles in the control signal.

SHM has been extensively studied in the engineering literature in the last decades. Nowadays, many different techniques are available to approach this problem, among which we highlight:
\begin{itemize}
	\item[1.] \textit{Conversion into a finite-dimensional optimization problem}. It consists in fixing a suitable waveform for the desired control signal and finding the optimal location of the switching angles \cite{perez2017}. This approach has the main drawback of requiring an a priori knowledge of the waveform, which may be quite cumbersome in some situations. In fact, even determining the number of switching angles is not straightforward in general.
	\item[2.] \textit{Conversion into a polynomial system}. This approach is based on employing appropriate algebraic transformations to convert the SHM problem into a polynomial system whose solutions' set contains all the possible waveforms for a predetermined number of switching angles (see \cite{yang2016} and the references therein). Hence, also in this case, it is necessary to fix a priori the number of switching angles, which is something that, in general, is preferable to avoid. 
	\item[3.] \textit{Conversion into a constrained optimal control problem}. This is a very novel approach, which has been firstly proposed in our recent contribution \cite{biccari2021A}. It consists in identifying the Fourier coefficients of the signal with the terminal state of a controlled dynamical system, where the control is actually the signal, which is computed by minimizing a suitable cost functional. In contrast with the aforementioned methodologies, the main advantage of this formulation is that neither the waveform of the solution nor the number of switching angles need to be a priori determined. On the contrary, the optimal waveform and switching angles are automatically obtained as a result of the minimization process.
\end{itemize}

In this contribution, we present an alternative optimal control approach to solve the SHM problem, based on the so-called adjoint methodology, which has been systematically associated to optimal control problems and their applications to several fields of science and engineering \cite{castro2007,giles2000}. A similar methodology has already been applied in our recent work \cite{biccari2021} to obtain multilevel controls for linear time-dependent control systems satisfying the Kalman rank condition. In this work, we will show how this dual technique can be adapted to solve the SHM problem.

This document is structured as follows. In Section \ref{sec:math_formulation}, we introduce the mathematical formulation of the SHM problem. In Section \ref{sec:OptimalControl}, we present the optimal control approach to SHM and its dual version, which is the main contribution of the present paper. Section \ref{sec:Numerics} is devoted to numerical examples of a concrete SHM problem that we have solved by means of our methodology. Finally, in Section \ref{sec:Conclusions}, we summarize the conclusions of our work and propose some open problems for future investigation.

\section{Preliminaries}\label{sec:math_formulation}

In this section, we present the mathematical formulation of the SHM problem and introduce the notation that will be used throughout the paper. Let 
\begin{align}\label{eq:Udef}
	\mathcal{U} = \{u_1, \ldots, u_L\}
\end{align}
be a given set of $L\geq 2$ real numbers satisfying
\begin{align*}
	& u_1 = -1, \; u_L = 1 \;\text{ and } \; u_{k+1} = u_k + \frac{2}{L-1}, \quad\forall k\in \{1,\ldots, L\}.
\end{align*}

The main objective of SHM is to construct a step function $u(t):[0,2\pi)\to\mathcal U$, with a finite number of switches, such that some lower-order coefficients in its Fourier expansion 
\begin{align*}
	u(t) = \sum_{j \in \mathbb{N}} \Big[a_j \cos(jt) + b_j \sin(jt)\Big]
\end{align*}
take specific values prescribed a priori.

Due to applications in power converters, it is typical to only consider functions with \textit{half-wave symmetry}, i.e. 
\begin{align}\label{eq:hwSymmetry}
	u(t + \pi) = -u(t)\quad \forall t \in [0,\pi).
\end{align}
Notice that, because of \eqref{eq:hwSymmetry}, the Fourier coefficients of $u$ corresponding to even indices $j$ vanish: 
\begin{align*}
	a_j &= \frac{1}{\pi} \int_0^{2\pi} u(\tau) \cos(j\tau)\,d\tau = \frac{1}{\pi} \int_0^{\pi} u(\tau) \cos(j\tau)d\tau + \frac{1}{\pi}\int_{\pi}^{2\pi} u(\tau)\cos(j\tau)\,d\tau
	\\
	&= \frac{1}{\pi} \int_0^{\pi} u(\tau)\Big[\cos(j\tau)-\cos(j(\tau+\pi))\Big]\,d\tau = \frac{1}{\pi} \Big(1-(-1)^j\Big)\int_0^{\pi} u(\tau)\cos(j\tau)\,d\tau,
	\\[15pt]
	b_j &= \frac{1}{\pi} \int_0^{2\pi} u(\tau) \sin(j\tau)\,d\tau = \frac{1}{\pi} \Big(1-(-1)^j\Big)\int_0^{\pi} u(\tau)\sin(j\tau)\,d\tau, 
\end{align*}
which gives the Fourier expansion
\begin{align}\label{eq:an}
	&u(t) = \sum_{\underset{j\, odd}{j \in \mathbb{N}}} \Big[a_j \cos(jt) + b_j \sin(jt)\Big] \notag
	\\
	&a_j = \frac{2}{\pi} \int_0^\pi u(\tau ) \cos(j \tau)\,d\tau,\quad b_j = \frac{2}{\pi} \int_0^\pi u(\tau)  \sin(j \tau)\,d\tau. 
\end{align}

Moreover, in view of \eqref{eq:hwSymmetry}, in what follows, we will only work with the restriction $u\vert_{[0,\pi)}$ which, with some abuse of notation, we still denote by $u$. Hence, from now on, we will consider piece-wise constant functions $u: [0,\pi)\to \mathcal{U}$ of the form 
\begin{align}\label{eq:uExpl}
	u (t)= \sum_{m=0}^M s_m\chi_{[\phi_m,\phi_{m+1})} (t), \quad M\in\mathbb{N}, 
\end{align}
where
\begin{itemize}
	\item $\mS = \{s_m\}_{m=0}^M$ with $s_m\in \mathcal{U}$ and $s_m\neq s_{m+1}$ for all $m\in \{0,\ldots, M\}$ is the waveform. 
	
	\vspace{0.1cm}
	\item $\Phi = \{ \phi_m\}_{m=1}^{M}$ are the switching angles such that
	\begin{align*}
		0= \phi_0 < \phi_1 <\ldots < \phi_M < \phi_{M+1} = \pi.
	\end{align*}
	
	\vspace{0.1cm}
	\item $\chi_{[\phi_m,\phi_{m+1})}$ denotes the characteristic function of the interval $[\phi_m,\phi_{m+1})$.
\end{itemize}
Observe that $\mS$ and $\Phi$ fully characterize any function $u$ of the form \eqref{eq:uExpl}. 

In the engineering applications that motivated our study, due to technical limitations, it is preferable to employ signals taking consecutive values in $\mathcal{U}$. In the sequel, we will refer to this property of the waveform as the \emph{staircase property}, which can be rigorously formulated as follows.
\begin{definition}\label{def:staircaseDef}
We say that a signal $u$ of the form \eqref{eq:uExpl} fulfills the \emph{staircase property} if its waveform $\mS$ satisfies
\begin{align}\label{eq:staircase}
	\left]s_m^{min},s_{m}^{max}\right[ \cap \mathcal{U} = \emptyset, \quad \forall m\in \{ 0, \ldots, M-1 \},
\end{align}
where $s^{min}_m = \min\{s_m,s_{m+1}\}$ and $s^{max}_m = \max\{s_m,s_{m+1}\}$.
\end{definition}

Note that when $\mathcal{U} = \{-1,1\}$ (which is known in the SHM literature as the bi-level problem), this property is satisfied for any $u$ of the form \eqref{eq:uExpl}.

We can now conclude this section by formulating the SHM problem as follows.

\begin{problem}[SHM]\label{pb:SHEp}
Let $\mathcal{U}$ be given as in \eqref{eq:Udef}, and let $\mathcal{E}_a$ and $\mathcal{E}_b$ be finite sets of odd numbers of cardinality $\vert\mathcal{E}_a\vert = N_a$ and $\vert\mathcal{E}_b\vert = N_b$ respectively. For any two given vectors $\aT \in \mathbb{R}^{N_a}$ and $\bT \in \mathbb{R}^{N_b} $, we want to construct a function $u: [0,\pi)\to\mathcal{U}$ of the form \eqref{eq:uExpl}, satisfying \eqref{eq:staircase}, such that the vectors ${\bf a} \in \mathbb{R}^{N_a}$ and ${\bf b} \in \mathbb{R}^{N_b}$, defined as
\begin{align}\label{eq:vectorsAB}
	{\bf a} = \big(a_j \big)_{j\in \mathcal{E}_a} \qquad \text{and} \qquad
	{\bf b} = \big(b_j \big)_{j\in \mathcal{E}_b}
\end{align}
satisfy ${\bf a} = \aT$ and ${\bf b} = \bT$, where the coefficients $a_j$ and $b_j$ in \eqref{eq:vectorsAB} are given by \eqref{eq:an}.
\end{problem}  

\section{SHM as an optimal control problem}\label{sec:OptimalControl}

In \cite{biccari2021A}, the SHM problem has been formulated via optimal control. In this formulation, the Fourier coefficients of the signal $u(t)$ are identified with the terminal state of a controlled dynamical system of $N:=N_a+N_b$ components defined in the time-interval $[0,\pi)$. The control of the system is precisely the signal $u(t)$, defined as a function $[0,\pi)\to \mathcal{U}$, which has to steer the state from the origin to the desired values of the prescribed Fourier coefficients. This approach is based on the observation (see \cite[Section 4]{biccari2021A} for more detail) that the Fourier coefficients of the function $u(t)$ can be rewritten as the initial state of a dynamical system controlled by $u(t)$
\begin{align}\label{eq:CauchyReversed}
	\begin{cases}
		\dot{\bf x}(t) = {\bf C}(t)u(t),  & t \in [0,\pi)
		\\[5pt]
		{\bf x}(0) = [\aT; \bT]=:{\bf x}_0,
	\end{cases}
\end{align}
with
\begin{equation}\label{eq:Dynamics}
	{\bf C}(t) = \left[{\bf C}^a(t),{\bf C}^b(t)\right]^\top, 
\end{equation}
and ${\bf C}^a(t) \in \RR^{N_a} $ and ${\bf C}^b(t) \in\RR^{N_b}$ given by
\begin{align}\label{eq:Bab}
	& {\bf C}^a(t) = -\frac 2\pi\Big[\cos(e_a^1t),\cos(e_a^2t),\cdots,\cos(e_a^{N_a}t)\Big] \notag 
	\\
	\\
	& {\bf C}^b(t) = -\frac 2\pi\Big[\sin(e_b^1t),\sin(e_b^2t),\cdots,\sin(e_b^{N_b}t)\Big]. \notag 
\end{align}
Here, $e_a^i$ and $e_b^i$ denote the elements in $\mathcal{E}_a$ and $\mathcal{E}_b$, i.e.
\begin{align*}
	\mathcal{E}_a = \{e_a^1,e_a^2,e_a^3,\dots,e_a^{N_a}\}, \quad \mathcal{E}_b = \{e_b^1,e_b^2,e_b^3,\dots,e_b^{N_b}\}.
\end{align*}

We notice that the unique solution to \eqref{eq:CauchyReversed} can be characterized through a direct integration against a test function $\phi\in\RR^N$ as 
\begin{align}\label{eq:weakSol}
\langle {\bf x}(t) - {\bf x}_0,\phi\rangle = \int_0^t \langle u(\tau),{\bf C}^\top(\tau)\phi\rangle\,d\tau,
\end{align}
where $\langle\cdot,\cdot\rangle$ is the standard scalar product in $\RR^N$. We shall use this \eqref{eq:weakSol} in the proof of our main result Theorem \ref{thm:control}.

With this formulation, the SHM Problem \ref{pb:SHEp} can be reduced to a controllability one for \eqref{eq:CauchyReversed}.

\begin{problem}[SHM via controllability]\label{pb:SHEpControl}
Let $\mathcal{U}$ be given as in \eqref{eq:Udef}, and let $\mathcal{E}_a$, $\mathcal{E}_b$ and the targets $\aT$ and $\bT$ be given as in Problem \ref{pb:SHEp}. We look for a function $u: [0,\pi)\to [-1,1]$ of the form \eqref{eq:uExpl}, satisfying \eqref{eq:staircase}, such that the solution ${\bf x} \in C([0,\pi);\RR^N)$ to \eqref{eq:CauchyReversed} satisfies ${\bf x}(\pi) = 0$.
\end{problem}

In \cite{biccari2021A}, we have shown that Problem \ref{pb:SHEpControl} can be solved via an optimal control approach, by computing 
the optimal SHM control $\ue \in L^1([0,\pi);[-1,1])$ through
\begin{align}\label{eq:directFunct}
	&\ue = \underset{{\underset{{\bf x} \text{ solves } \eqref{eq:CauchyReversed}}{u \in L^1([0,\pi);[-1,1])}}}{\text{argmin}} \Fe(u)
	\\
	&\Fe(u)=\frac{1}{2\varepsilon} \|{\bf x}(\pi)\|^2_{\RR^N} + \int_0^\pi \mathcal{L}(u(t))\,dt,\notag
\end{align}
where $\varepsilon>0$ is a (small) penalization parameter and the function $\mathcal L$ is constructed as the piece-wise affine interpolation of the parabola $\mathcal P(u) = u^2$ in $[-1,1]$, considering the elements in $\mathcal{U}$ as the interpolating points:
\begin{align}\label{eq:PLP}
	&\mathcal{L}(u) = \begin{cases}
		\lambda_k(u) & \text{if }  u \in [u_k,u_{k+1}) \\ 1 & \text{if } u = u_{L} 
		\end{cases} 
		\\
		&\notag \text{for all } u\in[-1,1] \text{ and } k \in \{1,\dots,L-1\}, 
\end{align}
where 
\begin{align}\label{eq:lambda_k}
	\lambda_k(u):= (u_{k+1}+u_k)u - u_ku_{k+1}.
\end{align}
In particular, we have proved in \cite[Theorems 4.1-4.2 and Proposition 4.1]{biccari2021A} that: 
\begin{itemize}
	\item The optimal control $\ue$, solution to Problem \ref{pb:SHEpControl} is unique and has the form \eqref{eq:uExpl} satisfying \eqref{eq:staircase}.
	\item $\ue$ is continuous with respect to the initial datum ${\bf x}_0$ in the strong topology of $L^1(0,\pi)$.
	\item The optimal trajectory $\xe$ associated with $\ue$ satisfies
	\begin{align}\label{eq:approxControl}
		\|\xe(\pi)\|^2_{\RR^N} \leq 4\varepsilon\pi\|\mathcal{L}\|_\infty,
	\end{align}
	that is, $\ue$ is an \textit{approximate control} solution of the SHM problem, allowing us to get $\varepsilon$-close to the target Fourier coefficients. 	
\end{itemize}

As we anticipated, the main interest of this paper is to show that the SHM Problem \ref{pb:SHEpControl} can be solved through duality. As a matter of fact, applying general results of the Fenchel-Rockafellar theory \cite{ekeland1999}, we can build the following dual problem associated with \eqref{eq:directFunct}
\begin{align}\label{eq:adjointFunPrel}
	&\pe = \underset{{\bf p}_\pi\in\RR^N}{\text{argmin}} \;\Je({\bf p}_\pi)
	\\
	&\Je({\bf p}_\pi)=\int_0^\pi \mathcal L^\star\big({\bf C}^\top(t){\bf p}(t)\big)\,dt + \frac{\varepsilon}{2}\norm{{\bf p}_\pi}{\RR^N}^2 + \langle {\bf x}_0,{\bf p}(0)\rangle, \notag 
\end{align}
where ${\bf p}$ is the solution of the adjoint equation
\begin{align}\label{eq:adjointEq}
	\begin{cases}
		\dot{\bf p}(t) = 0, & t\in [0,\pi)
		\\
		{\bf p}(\pi) = {\bf p}_\pi\in\RR^N
	\end{cases}
\end{align}
and $\mathcal L^\star$ denotes the convex conjugate of $\mathcal L$ (see \cite[Chapter 3]{boyd2004} for the definition of $\mathcal L^\star$). 

Moreover, we notice that in the particular situation of the SHM problem considered in this work, the adjoint dynamics is trivial and the solution of \eqref{eq:adjointEq} is constant, ${\bf p}(t) = {\bf p}_\pi$ for all $t\in [0,\pi)$. Hence, \eqref{eq:adjointFunPrel} simplifies into
\begin{align}\label{eq:adjointFun}
	&\pe = \underset{{\bf p}_\pi\in\RR^N}{\text{argmin}}\; \Je({\bf p}_\pi)
	\\
	&\Je({\bf p}_\pi)=\int_0^\pi \mathcal L^\star\big({\bf C}^\top(t){\bf p}_\pi\big)\,dt + \frac{\varepsilon}{2}\norm{{\bf p}_\pi}{\RR^N}^2 + \langle {\bf x}_0,{\bf p}_\pi\rangle, \notag 
\end{align}
where the dynamical behavior is given only by the time-dependent matrix ${\bf C}(t)$. 

Let us stress that the convex conjugate of a piece-wise affine function is still piece-wise affine and convex. In particular, for $\mathcal L$ defined as in \eqref{eq:PLP}-\eqref{eq:lambda_k}, $\mathcal L^\star$ can be computed explicitly and takes the form (see \cite[Lemma A4]{biccari2021}, \cite[Chapter 3]{boyd2004} and \cite[Section 3.31]{boyd2004A}) 
\begin{equation}\label{eq:PLPconjugate}
	\begin{array}{ll}
		\mathcal{L}^\star(\omega) = \begin{cases}
			\lambda_1^\star(\omega) & \text{if }  \omega \in [\omega_0,\omega_1) 
			\\
			\lambda_\ell^\star(\omega) & \text{if } \omega \in\,[\omega_{\ell-1},\omega_\ell)
			\\
			\lambda_L^\star(\omega) & \text{if } \omega \in [\omega_\ell,\omega_L) 
		\end{cases}, 
		& \;\begin{array}{l} \forall \omega\in [\omega_0,\omega_L] \\[5pt] \ell \in \{2,\dots,L-1\} \end{array},
	\end{array}  
\end{equation}
where 
\begin{equation}\label{eq:points}
	\begin{array}{l}
		\displaystyle\omega_0:=\omega_1-\frac{4}{L-1}, \quad \omega_L:=\omega_{L-1}+\frac{4}{L-1}
		\\[10pt]
		\displaystyle\omega_\ell:= u_\ell+u_{\ell+1}, \quad\forall \ell \in \{2,\dots,L-1\}, 	
	\end{array} 	
\end{equation}
and
\begin{align}\label{eq:lambda_k_conjugate}
	\lambda_k^\star(\omega):= u_k \omega - u_k^2, \quad\forall k\in \{1,\dots,L\}.
\end{align}

Dual optimization problems as \eqref{eq:adjointFunPrel} are widely employed in the control literature as they usually reduce the computational effort with respect to their \textit{primal} counterpart \eqref{eq:directFunct} when actually addressing the optimization process. This is essentially because the optimization variable in \eqref{eq:directFunct} is the control $u\in L^1([0,\pi);[-1,1])$, which is time-dependent, while in \eqref{eq:adjointFunPrel} one minimizes over ${\bf p}_\pi\in\RR^N$, the final state of the adjoint equation \eqref{eq:adjointEq}. This renders an optimization process which is, typically, computationally less expensive. We refer to \cite{boyer2013} for a more complete discussion on these issues in the context of the heat equation.

In what follows, we will show that the multilevel staircase control solution to the SHM Problem \ref{pb:SHEpControl} can be obtained through the minimization of \eqref{eq:adjointFun} and is given by
\begin{align}\label{eq:control}
	\ue(t)\in \partial\mathcal L^\star\big({\bf C}^\top(t)\pe\big), \quad\quad\text{ for a.e. } t\in [0,\pi),
\end{align}
where $\partial\mathcal L^\star$ denotes the subdifferential of the (non-differential) function $\mathcal L^\star$, defined for any ${\bf q}\in\RR^N$ as
\begin{align*}
	\partial \mathcal L^\star({\bf q}) = \Big\{\mathcal C\in \RR : \mathcal L^\star({\bf r}) - \mathcal L^\star({\bf q}) \geq \mathcal C({\bf r}-{\bf q}),\;\forall {\bf r}\in\RR^N\Big\}. 
\end{align*}

Besides, since $\partial\mathcal L^\star$ coincides with the standard derivative wherever the function $\mathcal L^\star$ is differentiable, we  have from \eqref{eq:control} that $\ue$ is given by the slopes of the linear branches $\lambda_k^\star$ in \eqref{eq:lambda_k_conjugate}, except for the $t_m$ such that ${\bf C}^\top(t_m)\pe=\omega_m$, $\omega_m$ being one of the values given in \eqref{eq:points}. This, in particular, will imply that the controls computed through \eqref{eq:adjointFun} have the desired multilevel and staircase structure. As a matter of fact, we have the following result.
\begin{theorem}\label{thm:control}
For any $\varepsilon > 0$, there exists a unique minimizer $\pe\in\RR^N$ of the functional $\Je$. Moreover, this minimizer is related with the minimizer $\ue$ of $\Fe$ through the formulas
\begin{align}\label{eq:ueChar}
	\ue(t)\in \partial\mathcal L^\star\big({\bf C}^\top(t)\pe\big), \quad\quad\text{ for a.e. } t\in [0,\pi)
\end{align}
and 
\begin{align}\label{eq:xeChar}
	\xe(\pi) = -\varepsilon\pe.
\end{align}
In particular, the approximate controllability condition \eqref{eq:approxControl} is satisfied, i.e. $\pe$ univocally determines a multilevel and staircase approximate control $\ue$, solution to the SHM Problem \ref{pb:SHEpControl}.
\end{theorem}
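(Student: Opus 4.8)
The plan is to obtain the dual minimizer $\pe$ by the direct method, to read off the relations \eqref{eq:ueChar}--\eqref{eq:xeChar} from the first-order optimality condition for $\Je$, and finally to recognize the resulting $\ue$ as the (already known to be unique) minimizer of $\Fe$, so that the structural properties and the bound \eqref{eq:approxControl} follow from \cite{biccari2021A}. This is a direct verification complementing the Fenchel--Rockafellar derivation of \eqref{eq:adjointFun}.

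First I would settle existence and uniqueness of $\pe$. The functional $\Je$ splits into the integral term ${\bf p}_\pi\mapsto\int_0^\pi\mathcal L^\star({\bf C}^\top(t){\bf p}_\pi)\,dt$, which is convex (a convex function $\mathcal L^\star$ composed with the linear map ${\bf p}_\pi\mapsto{\bf C}^\top(t){\bf p}_\pi$), the strictly convex quadratic $\tfrac{\varepsilon}{2}\norm{{\bf p}_\pi}{\RR^N}^2$, and the linear term $\langle{\bf x}_0,{\bf p}_\pi\rangle$. Since $\mathcal L$ has bounded domain $[-1,1]$, its conjugate $\mathcal L^\star$ is finite-valued, continuous, and bounded below by an affine function; hence the integral term is finite, continuous and bounded below, while the quadratic term forces $\Je({\bf p}_\pi)\to+\infty$ as $\norm{{\bf p}_\pi}{\RR^N}\to\infty$. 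Thus $\Je$ is continuous, strictly convex and coercive, and the direct method yields a unique minimizer $\pe$.

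Next I would write the Euler--Lagrange condition $0\in\partial\Je(\pe)$. The smooth part contributes the gradient $\varepsilon\pe+{\bf x}_0$, while for the integral term the chain rule for integral functionals of a finite convex integrand gives $\partial\big[\int_0^\pi\mathcal L^\star({\bf C}^\top(t)\,\cdot\,)\,dt\big](\pe)=\big\{\int_0^\pi{\bf C}(t)v(t)\,dt : v(t)\in\partial\mathcal L^\star({\bf C}^\top(t)\pe)\ \text{a.e.}\big\}$. Optimality therefore produces a measurable selection $\ue(t)\in\partial\mathcal L^\star({\bf C}^\top(t)\pe)$ --- this is exactly \eqref{eq:ueChar} --- with $\varepsilon\pe+{\bf x}_0+\int_0^\pi{\bf C}(t)\ue(t)\,dt=0$. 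Evaluating the weak formulation \eqref{eq:weakSol} at $t=\pi$ against an arbitrary $\phi\in\RR^N$ gives $\xe(\pi)-{\bf x}_0=\int_0^\pi{\bf C}(\tau)\ue(\tau)\,d\tau$, and combining with the optimality identity yields $\xe(\pi)={\bf x}_0-(\varepsilon\pe+{\bf x}_0)=-\varepsilon\pe$, which is \eqref{eq:xeChar}.

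It then remains to confirm that this $\ue$ minimizes $\Fe$. The first-order condition for the convex primal functional reads $-\tfrac1\varepsilon{\bf C}^\top(t)\xe(\pi)\in\partial\mathcal L(\ue(t))$ a.e.; substituting \eqref{eq:xeChar} this becomes ${\bf C}^\top(t)\pe\in\partial\mathcal L(\ue(t))$, which by the Fenchel--Young equivalence $\omega\in\partial\mathcal L(u)\iff u\in\partial\mathcal L^\star(\omega)$ is precisely \eqref{eq:ueChar}, already established. By convexity this condition is sufficient, so $\ue$ minimizes $\Fe$, and by the uniqueness of the primal minimizer recalled from \cite{biccari2021A} it is \emph{the} minimizer; consequently $\ue$ inherits the multilevel staircase form \eqref{eq:uExpl}--\eqref{eq:staircase} and the bound \eqref{eq:approxControl}. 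The main obstacle is the rigorous subdifferential computation of the integral term and the measurable selection it produces: one must interchange subdifferentiation and integration and ensure $\ue$ is well defined a.e. Here I would use that $t\mapsto{\bf C}^\top(t)\pe$ is a non-constant (for ${\bf x}_0\neq0$) real-analytic trigonometric polynomial, so it meets each kink value $\omega_m$ of $\mathcal L^\star$ only on a set of measure zero; off this set $\partial\mathcal L^\star({\bf C}^\top(t)\pe)$ is the single slope $u_k\in\mathcal U$, which pins down $\ue(t)$ uniquely and measurably and simultaneously accounts for its multilevel values.
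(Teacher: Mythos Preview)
Your proposal is correct and follows essentially the same route as the paper: existence and uniqueness of $\pe$ by the direct method, the Euler--Lagrange condition for $\Je$ yielding both \eqref{eq:ueChar} and \eqref{eq:xeChar} via \eqref{eq:weakSol}, and the Fenchel--Young equivalence $\omega\in\partial\mathcal L(u)\iff u\in\partial\mathcal L^\star(\omega)$ to identify $\ue$ with the primal minimizer. Your organization is somewhat cleaner (you construct $\ue$ directly from the dual optimality and then verify it solves the primal, rather than intertwining the two Euler--Lagrange relations as the paper does), and you make explicit the real-analyticity argument for the measurable selection that the paper defers to \cite{biccari2021}; the only slip is that the non-constancy of $t\mapsto{\bf C}^\top(t)\pe$ should be tied to $\pe\neq0$ rather than ${\bf x}_0\neq0$.
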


\begin{proof}
First of all, the existence and uniqueness of a minimizer $\pe\in\RR^N$ for $\Je$ is an immediate consequence of the direct method of calculus of variations, since the functional is clearly continuous, strictly convex and coercive, i.e.
\begin{align*}
	\Je({\bf p}_\pi)\to +\infty \quad\text{ as }\quad \norm{{\bf p}_\pi}{\RR^N}\to+\infty.
\end{align*} 

Let us show that, if $\ue$ is in the form \eqref{eq:ueChar}, then \eqref{eq:xeChar} holds. To this end, we first notice that the minimizer $\pe$ can be characterized by the Euler-Lagrange equation
\begin{align*}
	\frac{d}{d\sigma}\Je(\pe + \sigma\tilde{\bf p}_\pi)\Big|_{\sigma = 0} = 0, \quad\quad\forall\;\tilde{\bf p}_\pi\in\RR^N,
\end{align*}
which, in our case, reads as
\begin{align}\label{eq:E-Ladjoint}
	-\langle \varepsilon\pe + {\bf x}_0,\tilde{\bf p}_\pi\rangle \in \int_0^\pi \big\langle\partial\mathcal L^\star\big({\bf C}^\top(t)\pe\big), {\bf C}^\top(t)\tilde{\bf p}_\pi\big\rangle\,dt.
\end{align}
In the same way, the minimizer $\ue$ of \eqref{eq:directFunct} can be characterized by the Euler-Lagrange equation
\begin{align}\label{eq:E-Ldirect}
	0\in \int_0^\pi \langle\partial\mathcal L(\ue(t)),v(t)\rangle\,dt + \frac 1\varepsilon\langle \xe(\pi),\tilde{\bf x}(\pi)\rangle, \quad \forall \; v\in L^1([0,\pi);[-1,1]),
\end{align}
where $\tilde{\bf x}$ is the solution of \eqref{eq:CauchyReversed} corresponding to $\tilde u$. Moreover, let us recall that, by using \eqref{eq:weakSol} with $\phi = \tilde{\bf p}_\pi$, the optimal trajectory $\xe$ is characterized as
\begin{align*}
	\langle \xe(\pi) - {\bf x}_0,\tilde{\bf p}_\pi\rangle \in \int_0^\pi \big\langle\partial\mathcal L^\star\big({\bf C}^\top(t)\pe\big),{\bf C}^\top(t)\tilde{\bf p}_\pi\big\rangle\,dt.
\end{align*}
Combining this with \eqref{eq:E-Ladjoint}, we get that 
\begin{align*}
	\langle \xe(\pi) - {\bf x}_0,\tilde{\bf p}_\pi\rangle = -\langle \varepsilon\pe + {\bf x}_0,\tilde{\bf p}_\pi\rangle,
\end{align*}
from which we immediately obtain that $\xe(\pi) = -\varepsilon \pe$.

Let now $v(t)\in L^1([0,\pi);[-1,1])$ and let ${\bf z}_v(t)$ be the solution of 
\begin{align}\label{eq:mainEqZ}
	\begin{cases}
		\dot{\bf z}_v(t) = {\bf C}(t)v(t), & t\in [0,\pi)
		\\
		{\bf z}_v(0) = 0
	\end{cases}
\end{align}
Multiplying \eqref{eq:mainEqZ} by $\pe$ and integrating by parts using \eqref{eq:xeChar} we obtain that 
\begin{align*}
	0 &= \int_0^\pi \langle \dot{\bf z}_v(t)-{\bf C}(t)v(t),\pe\rangle\,dt = \langle {\bf z}_v(\pi),\pe\rangle - \int_0^\pi \langle v(t),{\bf C}^\top(t)\pe\rangle\,dt 
	\\
	&= -\frac 1\varepsilon \langle {\bf z}_v(\pi),\xe(\pi)\rangle - \int_0^\pi \langle v(t),{\bf C}^\top(t)\pe\rangle\,dt. 
\end{align*}
Combining this with \eqref{eq:E-Ldirect}, we then have that 
\begin{align*}
	0\in \int_0^\pi \langle \partial\mathcal L(\ue(t))-{\bf C}^\top(t)\pe,v(t)\rangle \,dt.
\end{align*}
Since the above relation holds for all $v(t)\in L^1([0,\pi);[-1,1])$, we then conclude that
\begin{align*}
	{\bf C}^\top(t)\pe \in \partial\mathcal L(\ue(t)), \quad\quad\text{ for a.e. } t\in[0,\pi),
\end{align*}
which, thanks to \cite[Lemma A2]{biccari2021}, is equivalent to 
\begin{align*}
	\ue(t)\in \partial\mathcal L^\star({\bf C}^\top(t)\pe), \quad\quad\text{ for a.e. } t\in[0,\pi).
\end{align*}

Finally, the multilevel and staircase structure of $\ue$ in \eqref{eq:ueChar} is a direct consequence of the fact that, by construction, $\mathcal L^\star$ is a piece-wise affine function and, therefore $\partial\mathcal L^\star$ is piece-wise constant. The complete details of this last part of the proof are omitted here for brevity, and can be found in \cite{biccari2021}.
\end{proof}

\begin{remark}
According to Theorem \ref{thm:control}, our dual methodology provides a solution to the SHM Problem \ref{pb:SHEp} under the perspective of approximate controllability, meaning that we can construct a multilevel and staircase approximate control $\ue$ allowing us to get $\varepsilon$-close to the target Fourier coefficients. For completeness, we shall stress that in the literature (see, e.g., \cite{fabre1995}) approximate controllability had also been addressed through a slightly different approach, by minimizing the functional
\begin{align}\label{eq:adjointFunExact}
	&\Jeh({\bf p}_\pi) = \int_0^\pi \mathcal L^\star\big({\bf C}^\top(t){\bf p}_\pi\big)dt + \varepsilon \norm{{\bf p}_\pi}{\RR^N} - \langle {\bf x}_0,{\bf p}_\pi\rangle.
\end{align}

In this case, the coercivity of $\Jeh$, hence the existence of an optimal control $\pe$ is guaranteed by the unique continuation principle 
\begin{align}\label{eq:unique}
	{\bf C}(t)^\top {\bf p}(t) = 0 \;\text{ for a.e }t\in[0,\pi) \;\Rightarrow\; {\bf p}_\pi = 0,
\end{align}
which is trivially satisfied by the adjoint dynamics \eqref{eq:adjointEq}. This is the Fenchel-Rockafellar dual of the problem
\begin{align}\label{eq:directFunctExact}
	&\ue = \underset{\underset{\underset{\norm{x(\pi)}{\RR^N}\leq \varepsilon}{{\bf x} \text{ solves } \eqref{eq:CauchyReversed}}}{u \in L^1([0,\pi);[-1,1])}}{\text{argmin}} \int_0^\pi \mathcal{L}(u(t))\,dt
\end{align}

Nevertheless, as it has been observed in \cite{glowinski1994,boyer2013}, to solve \eqref{eq:adjointFunExact} or \eqref{eq:directFunctExact} numerically may lead to important computational difficulties, which can instead be avoided through \eqref{eq:adjointFun}

\end{remark} 

\section{Numerical experiments}\label{sec:Numerics}

In this section, we present some numerical experiments to illustrate how our dual optimal control problem \eqref{eq:adjointFun} allows to solve the SHM Problem \ref{pb:SHEpControl}. To this end, we consider the frequencies 
\begin{align}\label{eq:frequencies}
	\mathcal{E}_a = \mathcal{E}_b = \{1,5,7,11,13\}
\end{align}
and the target vectors 
\begin{align}\label{eq:targets}
	\aT = \bT = (m,0,0,0,0)^\top\in\RR^{5}, \; \forall m \in [-0.8,0.8].
\end{align}
In the engineering applications of SHM motivating our work, $m$ in \eqref{eq:targets} is the so-called \textit{modulation index}. Finally, we choose $\varepsilon=10^{-5}$ and the control set 
\begin{align}\label{eq:levels}
	\mathcal{U} = \left\{-1,0,1\right\},
\end{align}
meaning that $u^\ast$ will be a $3$-level staircase control function taking only the constant values in \eqref{eq:levels}. 

As for the minimization of $\Je$, we adopt the approach of \cite[Sections 6.1 and 6.2]{biccari2021A} and implement the interior point method via the optimization software \texttt{IPOpt} \cite{wachter2006implementation}, with the help of the open-source tool \texttt{CasADi} \cite{andersson2019casadi} for nonlinear optimization and algorithmic differentiation. 

Our experiments were conducted on a personal Acer laptop with 1.6 GHz Intel Core i5-10210U processor and 16GB RAM. The results of our simulations are displayed in Fig. \ref{fig:control} and \ref{fig:controlSide}, in which we have plotted the function
\begin{align*} 
\begin{array}{cccc}
	\Phi: & [-0.8, 0.8]\times [0,\pi] & \longrightarrow & \mathcal{U} 
	\\
	& (m,t)  &\longmapsto & u_m^\ast (t),
\end{array}
\end{align*}
where, for each $m \in [-0.8,0.8]$, $u_m^\ast$ represents the solution to the SHM problem in our case of study. That is, Fig. \ref{fig:control} and \ref{fig:controlSide} contain all the solutions of the SHM problem in the range of modulation indices $m \in [-0.8,0.8]$ and with the frequencies, targets coefficients and levels defined in \eqref{eq:frequencies}, \eqref{eq:targets} and \eqref{eq:levels}, respectively. 

In Fig. \ref{fig:control}, $u_m^\ast$ is displayed seen from above. Each vertical snapshot in the picture corresponds to a multilevel control for a specific modulation index, which moves from one level to another each time there is a change of color. Moreover, to better appreciate the multilevel and staircase structure of the controls obtained, we present in Fig. \ref{fig:controlSide} a side view of $u_m^\ast$. In both figures, we can appreciate how our numerical results are in accordance with Theorem \ref{thm:control}.

\begin{figure}[h]
	\centering
	\includegraphics[scale=0.48]{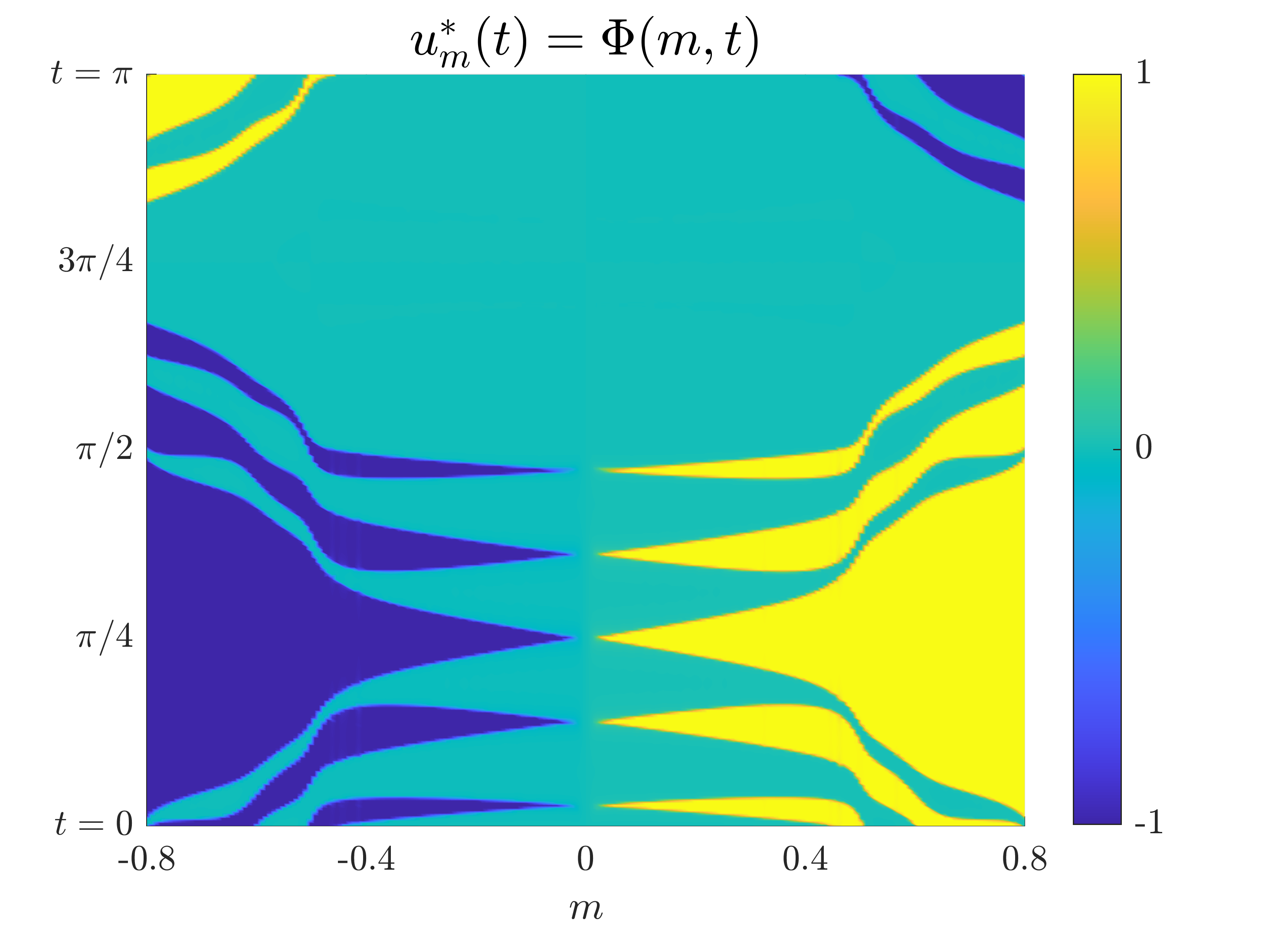}
	\caption{Top view of the $9$-levels staircase control for the SHM Problem \ref{pb:SHEpControl} computed via the minimization of $\Je$.}\label{fig:control}
\end{figure} 

\begin{figure}[h]
	\centering
	\includegraphics[scale=0.48]{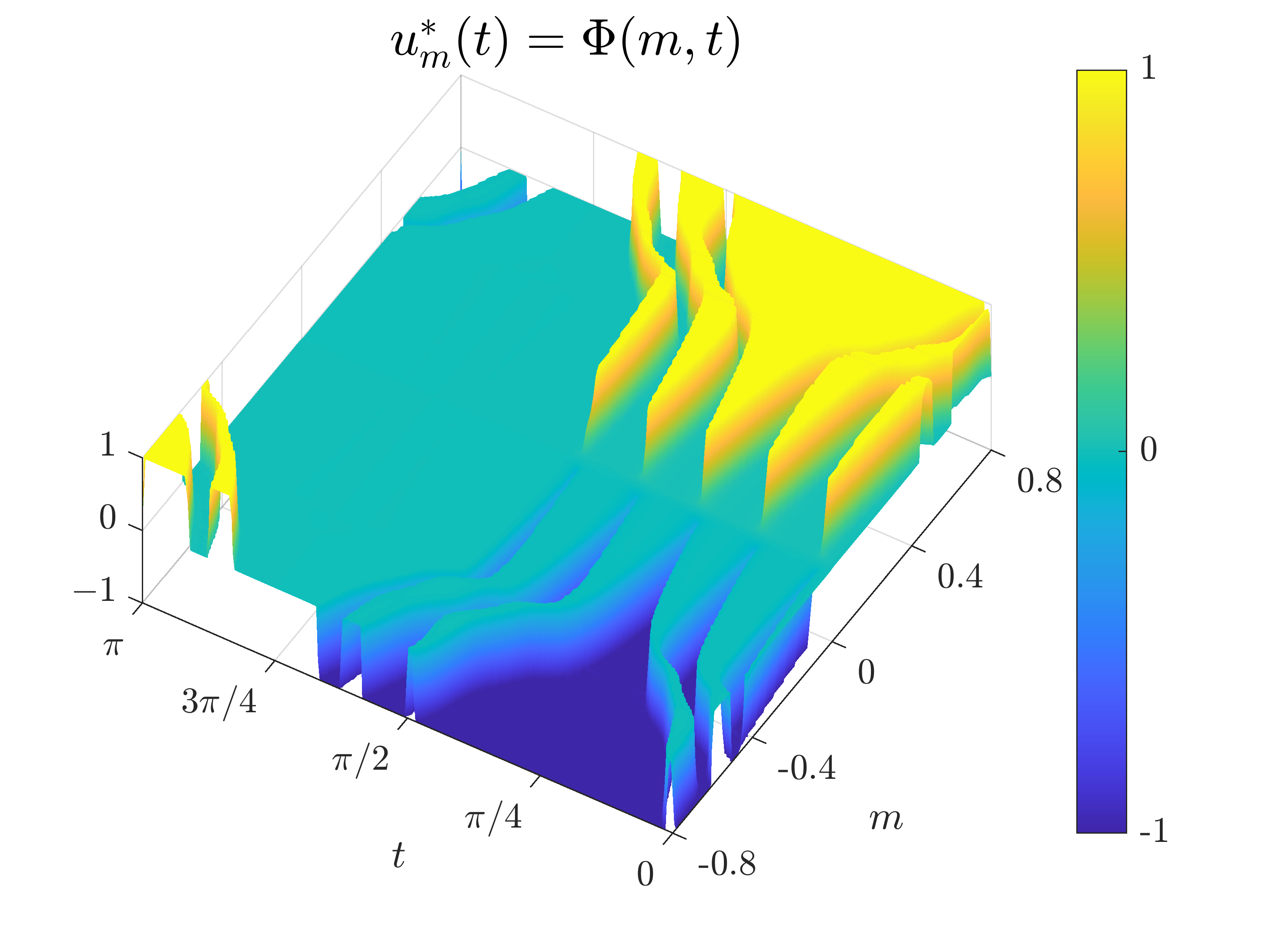}
	\caption{Side view of the $9$-levels staircase control for the SHM Problem \ref{pb:SHEpControl} computed via the minimization of $\Je$.}\label{fig:controlSide}
\end{figure} 

\section{Conclusions}\label{sec:Conclusions}

In this paper, we have discussed the Selective Harmonic Modulation problem in power electronic engineering adopting an optimal control viewpoint. More precisely, we have illustrated how the SHM Problem \ref{pb:SHEp} can be interpreted as an approximate controllability one for which the SHM signal $u$ plays the role of the control and can be obtained through a dual minimization process. Besides, we have shown both theoretically and through numerical simulations that, by suitably designing the penalization $\mathcal L^\star$ in our cost functional as the convex conjugate of the piece-wise affine approximation of a parabola, the proposed dual optimal control problem automatically generates SHM signals having the desired multilevel and staircase structure. 

As already observed in our previous contribution \cite{biccari2021A}, this optimal control approach to SHM solves several critical issues  arising in practical power electronic engineering applications. In particular, neither the waveform nor the number of switching angles need to be a priori determined, as they are implicitly established by the optimal control. This bypasses the (usually difficult) task of a priori estimating the number of switches which is necessary to reach the desired Fourier coefficients. 

However, some relevant issues are not covered by our study, and will be considered in future works:

\begin{itemize}
	\item[1.] \textbf{SHM via null controllability}. The results of this paper show that the SHM Problem \ref{pb:SHEp} can be addressed as an approximate controllability one. It would then be natural to analyze whether this can be extended to the null controllability framework, which corresponds to taking the limit $\varepsilon\to 0^+$ in \eqref{eq:adjointFun}. This would require some precise estimate of the computational cost with respect to $\varepsilon$.
	\item[2.] \textbf{Characterization of the solvable set}. The SHM Problem \ref{pb:SHEp}, by its own nature, brings some restrictions when converted into a controllability one. Firstly, the time horizon for the control problem is $\pi$, therefore fixed and bounded. Secondly, the energy of the controls is also bounded, as $u$ can only take the values in the control set $\mathcal U$. These restrictions yield that not all the initial data ${\bf x}_0\in\RR^N$ in \eqref{eq:CauchyReversed} are controllable, meaning that the SHM problem may not be solvable. It would then be interesting to have a full characterization of the solvable set for the SHM problem, thus determining the entire range of Fourier coefficients which can be reached by means of our approach.	A rough estimate of the solvable set can be obtained by considering \eqref{eq:adjointFun} with $\varepsilon=0$ and noticing that, as shown in \cite[Section 3]{biccari2021}, there exists a constant $\mathcal C>0$ such that 
	\begin{align}\label{eq:observability}
		\int_0^\pi \mathcal L^\star({\bf C}(t)^\top{\bf p}_\pi)\,dt \geq \mathcal C\norm{{\bf p}_\pi}{\RR^N}.
	\end{align}
	The observability inequality \eqref{eq:observability} is equivalent to the unique continuation \eqref{eq:unique} and, therefore, yields that, in the limit $\varepsilon\to 0^+$, the coercivity of $\Je$ is guaranteed for small enough initial data, namely, $\norm{{\bf x}_0}{\RR^N}\leq \mathcal C$. On the other hand, for large initial data, this coercivity may be lost. We stress that this is just a sufficient condition showing that the SHM problem can be solved through null controllability if the solvable set is contained in some ball in the euclidean space $\RR^N$. It would be interesting to obtain a sharper characterization of this solvable set and, possibly, its geometry. 
	
	\item[3.] \textbf{Minimal number of switching angles}. In practical applications, to optimize the converters' performance, it is required to keep the number of switches in the SHM signal the lowest possible. This yields to the very relevant yet challenging task of designing a cost functional for our optimal control problem which provides a multilevel and staircase signal $u$, with the additional structural property of having the minimal possible number of switching angles. A similar problem has been treated, in a different context, in \cite{lin2014}. There, the authors addressed optimal discrete-valued control problems using the so-called \textit{control parametrization enhancing technique} and introducing in their cost functional a penalization term involving the $TV$-norm of the control. This term has precisely the effect of reducing the variations (hence, the switching) in the control signal. It would be worth to analyze whether a similar approach is applicable also in our context, for instance by penalizing the $TV$-norm of the control in \eqref{eq:directFunct}, and if it effectively yields to a minimal switches' number.
\end{itemize}

\bibliographystyle{abbrv}
\bibliography{biblio}             

\begin{thebibliography}{10}

\bibitem{andersson2019casadi}
J.~A. Andersson, J.~Gillis, G.~Horn, J.~B. Rawlings, and M.~Diehl.
\newblock Cas{AD}i: a software framework for nonlinear optimization and optimal
  control.
\newblock {\em Math. Program. Comput.}, 11(1):1--36, 2019.

\bibitem{biccari2021A}
U.~Biccari, C.~Esteve-Yag{\"u}e, and D.~J. Oroya-Villalta.
\newblock Multilevel {S}elective {H}armonic {M}odulation via optimal control.
\newblock {\em arXiv preprint, arXiv:2103.10266}, 2021.

\bibitem{biccari2021}
U.~Biccari and E.~Zuazua.
\newblock Multilevel control by duality.
\newblock {\em arXiv preprint, arXiv:2109.02346}, 2021.

\bibitem{boyd2004}
S.~Boyd, S.~P. Boyd, and L.~Vandenberghe.
\newblock {\em Convex optimization}.
\newblock Cambridge university press, 2004.

\bibitem{boyd2004A}
S.~Boyd, S.~P. Boyd, and L.~Vandenberghe.
\newblock {\em Convex optimization - solution manual}.
\newblock 2004.

\bibitem{boyer2013}
F.~Boyer.
\newblock On the penalised {HUM} approach and its applications to the numerical
  approximation of null-controls for parabolic problems.
\newblock In {\em ESAIM: Proc.}, volume~41, pages 15--58. EDP Sciences, 2013.

\bibitem{castro2007}
C.~Castro, C.~Lozano, F.~Palacios, and E.~Zuazua.
\newblock Systematic continuous adjoint approach to viscous aerodynamic design
  on unstructured grids.
\newblock {\em AIAA J.}, 45(9):2125--2139, 2007.

\bibitem{ekeland1999}
I.~Ekeland and R.~Temam.
\newblock {\em Convex analysis and variational problems}.
\newblock SIAM, 1999.

\bibitem{fabre1995}
C.~Fabre, J.-P. Puel, and E.~Zuazua.
\newblock Approximate controllability of the semilinear heat equation.
\newblock {\em Proc. Roy. Soc. Edinburgh Sec. A Math.}, 125(1):31--61, 1995.

\bibitem{giles2000}
M.~B. Giles and N.~A. Pierce.
\newblock An introduction to the adjoint approach to design.
\newblock {\em Flow Turbul. Combus.}, 65(3):393--415, 2000.

\bibitem{glowinski1994}
R.~Glowinski and J.-L. Lions.
\newblock Exact and approximate controllability for distributed parameter
  systems.
\newblock {\em Acta Num.}, 3:269--378, 1994.

\bibitem{lin2014}
Q.~Lin, R.~Loxton, and K.~L. Teo.
\newblock The control parameterization method for nonlinear optimal control: a
  survey.
\newblock {\em Journal of Industrial and management optimization},
  10(1):275--309, 2014.

\bibitem{perez2017}
A.~Perez-Basante, S.~Ceballos, G.~Konstantinou, J.~Pou, J.~Andreu, and I.~M.
  de~Alegr{\'\i}a.
\newblock (2n+1) selective harmonic elimination-{PWM} for modular multilevel
  converters: A generalized formulation and a circulating current control
  method.
\newblock {\em IEEE Trans. Power Electron.}, 33(1):802--818, 2017.

\bibitem{Sun1996}
J.~Sun, S.~Beineke, and H.~Grotstollen.
\newblock Optimal {PWM} based on real-time solution of harmonic elimination
  equations.
\newblock {\em IEEE Trans. Power Electron.}, 11(4):612--621, 1996.

\bibitem{Sun1992}
J.~Sun and H.~Grotstollen.
\newblock Solving nonlinear equations for selective harmonic eliminated {PWM}
  using predicted initial values.
\newblock In {\em Proceedings of the 1992 International Conference on
  Industrial Electronics, Control, Instrumentation, and Automation}, pages
  259--264 vol.1, 1992.

\bibitem{wachter2006implementation}
A.~W{\"a}chter and L.~T. Biegler.
\newblock On the implementation of an interior-point filter line-search
  algorithm for large-scale nonlinear programming.
\newblock {\em Math. Program.}, 106(1):25--57, 2006.

\bibitem{yang2016}
K.~Yang, Q.~Zhang, J.~Zhang, R.~Yuan, Q.~Guan, W.~Yu, and J.~Wang.
\newblock Unified selective harmonic elimination for multilevel converters.
\newblock {\em IEEE Trans. Power Electron.}, 32(2):1579--1590, 2016.

\end{thebibliography}
                                                   







\end{document}